\newcommand{\C}{{\mathbb C}}
\newcommand{\R}{{\mathbb R}}
\newcommand{\Q}{{\mathbb Q}}
\newtheorem{theorem}{Theorem}[section]
\newtheorem{corollary}[theorem]{Corollary}
\newtheorem{proposition}[theorem]{Proposition}
\begin{document}

\title{Semigroup homomorphisms on matrix algebras}
\author[B. Burgstaller]{Bernhard Burgstaller}
\address{}
\email{bernhardburgstaller@yahoo.de}
\subjclass{20M25, 46L05}
\keywords{semigroup, ring, matrix, multiplicative, additive, unique addition, $C^*$-algebra}

\begin{abstract}
We explore the connection between ring homomorphisms
and semigroup homomorphisms on matrix algebras over
rings or $C^*$-algebras.
%
%
\end{abstract}


\maketitle

\section{Introduction}

It is an interesting question what possibly small portion of information
distinguishes multiplicative semigroup homomorphism between rings and ring homomorphisms between rings.
Rings on which every semigroup homomorphism is automatically additive
are called to have unique addition and there exists a vast literature on this
topic, see R.E. Johnson \cite{johnson}, L.M. Gluskin \cite{gluskin}, W.S. Martindale \cite{martindale},
R.E. Peinado \cite{peinado}, A.V. Mikhalev \cite{mikhalev} and many others.
It is not possible to characterize semigroups which are the multiplicative semigroup of a ring
axiomatically, see S.R. Kogalovskij \cite{kogalovskij}. 
There is also an extensive 
investigation when the occurring rings happen
to be matrix rings of the form $M_n(R)$, 
and here one is interested in classifying all
semigroup homomorphisms between them.
Confer J. Landin and I. Reiner \cite{reiner}, M.jun. {Jodeit} and T.Y. {Lam} \cite{lam},
D. Kokol-Bukov\v sek \cite{bukovsek}, J. Marovt \cite{marovt}, X. Zhang and C. Cao \cite{zhang2},
M. {Omladi\v{c}} and B. {Kuzma} \cite{omladic}
and many others.
%
The prototype answer appears to be that every semigroup homomorphism
$\phi:GL_n(K) \rightarrow GL_m(K)$ with $K$ a division ring and $m < n$ is of the form
$$\phi (x) = \psi(\mbox{det}(x))$$
for a semigroup homomorphism $\phi:{R^*}/[R^*,R^*] \rightarrow GL_m(K)$
and Dieudonn\'e's determinant, see 
D.\v{Z}. {Djokovi\'c} \cite{djokovic}.
For integral domains $R$, $M_n(R)$ has unique addition \cite{lam}.
Most 
investigations on semigroup homomorphisms of matrix algebras have ground rings principal ideal domains, fields or division rings.

Since a ring $R$ is Morita equivalent to its matrix ring $M_n(R)$
it is often no big restriction if one considers matrix rings. 
For example $K$-theory cannot distinguish between the ring and its stablization
by matrix.
Similar things can be said for $C^*$-algebras and their notion of Morita equivalence and
topological $K$-theory.

In this short note we show that a semigroup homomorphisms $\phi:M_2(R) \rightarrow S$ for
rings
$R$ and $S$ is a ring homomorphism if and only if it satisfies the single relation $\phi(e_{11})+\phi(e_{22})= \phi(1)$.
An analogous statement holds for $C^*$-algebras.
See Proposition \ref{prop1}, Corollary \ref{corollary_cstar} and Proposition \ref{prop_last} . 

Import and much deeper related results are the classification of $*$-semigroup
endomorphisms 
on the $C^*$-algebra $B(H)$ for $H$ an 
infinite Hilbert space by J. {Moln\'ar} \cite{molnar} and
of bijective semigroup homomorphisms between
standard 
operator algebras of Banach spaces by P. {\v{S}emrl} \cite{semrl} and J. {Moln\'ar} \cite{molnar2}.
Notice that 
$B(H) \cong M_n(B(H))$ 
is matrix-stable for which our observation applies.
When finishing this note we came also accross the strongly 
related paper \cite{hakeda} by J. Hakeda,
but it considers bijective $*$-semigroup isomorphisms between $*$-algebras.

We will also 
investigate how group homomorphisms (of the form $\phi \otimes id_{M_{16}}$) 
on unitary and general linear groups
of matrix $C^*$-algebras 
can be extended to ring or $*$-homomorphisms, see
Propositions \ref{prop_unitaer} and \ref{abl}.
It seems to be an interesting and widely open question which group homomorphisms between groups $GL(M_n(R))$ for typically
noncommutative non-division rings $R$ with zero divisors even exist, if not restrictions of ring homomorphisms on $M_n(R)$.
For example $\left ( \begin{matrix} p & 1-p \\ 1-p & p \end{matrix} \right )$ is an invertible matrix for an orthogonal projection
$p \in B(H)$, but no entry is invertible 
and Dieudonn\'e's determinant is not 
applicable.

\section{Algebra homomorphisms and semigroup homomorphisms}

For a ring $A$ we shall denote $M_n(A)$ also by
$A \otimes M_n$.
For algebras $A$ latter denotes the algebra tensor product.
We write $e_{ij} \in M_n$ and $e_{ij} := 1 \otimes e_{ij} \in A \otimes M_n$
for the usual matrix units.
We also use the notation $\phi_n$ for $\phi \otimes id_{M_n}:A \otimes M_n \rightarrow
B \otimes M_n$.
A $*$-semigroup homomorphism between $C^*$-algebras means an
involution respecting semigroup homomorphism.
For unital algebras we write $\lambda$ for $\lambda 1$, where $\lambda$ is a scalar. 
We say $\phi$ is $K$-homogeneous if $\phi(\lambda x)= \lambda \phi(x)$ for all $x$ and scalars $\lambda \in K$.
%

%


\begin{proposition}    \label{prop1}
Let $A,B$ be rings where $A$ is unital and $\varphi : A \otimes M_2 \rightarrow B$ an arbitrary function.
Then the following are equivalent:
\begin{itemize}
\item[(a)]

$\varphi$ is a ring homomorphism.

\item[(b)]

$\varphi$ is a semigroup homomorphism such that 
\begin{equation}    \label{eq_e}
\varphi(1) = \varphi(e_{11}) + \varphi(e_{22}).
\end{equation}

\end{itemize}

\end{proposition}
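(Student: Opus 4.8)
The implication (a) $\Rightarrow$ (b) is immediate: a ring homomorphism is multiplicative, and since $e_{11}+e_{22}=1$ in $A\otimes M_2$, additivity gives $\varphi(1)=\varphi(e_{11}+e_{22})=\varphi(e_{11})+\varphi(e_{22})$. The entire content lies in (b) $\Rightarrow$ (a), so assume $\varphi$ is multiplicative and satisfies \eqref{eq_e}. I would first extract the algebraic consequences of the single relation. Write $P=\varphi(e_{11})$, $Q=\varphi(e_{22})$, $U=\varphi(e_{12})$, $V=\varphi(e_{21})$ and $e=\varphi(1)$. Multiplicativity gives $P^2=P$ and $Q^2=Q$, while \eqref{eq_e} reads $e=P+Q$. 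The crucial point is that \eqref{eq_e} forces $\varphi(0)=0$: from
\[
P=\varphi(e_{11}\cdot 1)=P\,\varphi(1)=P(P+Q)=P^2+PQ=P+PQ
\]
one gets $PQ=0$, and as $PQ=\varphi(e_{11}e_{22})=\varphi(0)$ this yields $\varphi(0)=0$. Hence $P,Q$ are orthogonal idempotents with $P+Q=e$, and a short computation with the matrix-unit relations shows $UV=P$, $VU=Q$, $PU=UQ=U$, $QV=VP=V$, all ``wrong-sided'' products vanishing; thus $P,Q,U,V$ mimic $e_{11},e_{22},e_{12},e_{21}$ inside the corner $eBe$.

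Next I would establish the Peirce-type identity
\begin{equation*}
\varphi(x)=\sum_{i,j\in\{1,2\}}\varphi(e_{ii}\,x\,e_{jj}),\qquad x\in A\otimes M_2, \tag{$\star$}
\end{equation*}
which follows from multiplicativity together with $\varphi(1)=P+Q$ alone: since $\varphi(x)=\varphi(1\cdot x\cdot 1)=e\,\varphi(x)\,e$ lies in $eBe$, the orthogonal decomposition $e=P+Q$ gives $\varphi(x)=\sum_{i,j}p_i\varphi(x)p_j$ with $p_1=P,\ p_2=Q$, and each summand equals $\varphi(e_{ii})\varphi(x)\varphi(e_{jj})=\varphi(e_{ii}xe_{jj})$. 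Identity $(\star)$ reduces full additivity to additivity on a single block: applying $(\star)$ to $x+y$ and then separately to $x$ and $y$, and noting $e_{ii}(x+y)e_{jj}=(x_{ij}+y_{ij})e_{ij}$, it suffices to prove $\varphi\big((a+b)e_{ij}\big)=\varphi(ae_{ij})+\varphi(be_{ij})$ for all $a,b\in A$ and each $(i,j)$.

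The main obstacle—recovering addition from multiplication—I would resolve first on the off-diagonal blocks via the unipotent identity $(1+ae_{12})(1+be_{12})=1+(a+b)e_{12}$, valid because $e_{12}^2=0$. Setting $W(c)=\varphi(ce_{12})$, identity $(\star)$ gives $\varphi(1+ce_{12})=e+W(c)$ with $W(c)\in PBQ$, whence $W(a)W(b)\in PBQ\,PBQ=0$ as $QP=0$, and $eW(b)=W(b)$, $W(a)e=W(a)$. Applying $\varphi$ to the unipotent identity then collapses to $W(a+b)=W(a)+W(b)$, which is exactly additivity in the $(1,2)$ block; the $(2,1)$ block is symmetric. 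For the diagonal blocks I would transfer this along the matrix units: from $ae_{11}=(ae_{12})e_{21}$ one gets $\varphi(ae_{11})=W(a)V$, so off-diagonal additivity together with right distributivity in $B$ gives $\varphi\big((a+b)e_{11}\big)=W(a+b)V=\varphi(ae_{11})+\varphi(be_{11})$, and likewise for $e_{22}$ using $ae_{22}=(ae_{21})e_{12}$. Combining block additivity with $(\star)$ shows $\varphi$ is additive, and since it is multiplicative by hypothesis, it is a ring homomorphism.
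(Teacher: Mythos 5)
Your proof is correct, and while it shares the paper's overall architecture, the key lemma is genuinely different. Both arguments begin with the same Peirce-type decomposition $\varphi(x)=\sum_{i,j}\varphi(e_{ii}xe_{jj})$ extracted from multiplicativity and (\ref{eq_e}); but where the paper encodes addition via the product of \emph{singular} matrices,
$$\left( \begin{matrix} 1 & a \\ 0 & 0 \end{matrix} \right)\left( \begin{matrix} b & 0 \\ 1 & 0 \end{matrix} \right)=\left( \begin{matrix} a+b & 0 \\ 0 & 0 \end{matrix} \right),$$
getting additivity directly on the diagonal corner and then propagating it to the other corners by matrix units, you use the unipotent identity $(1+ae_{12})(1+be_{12})=1+(a+b)e_{12}$ to get additivity first on the off-diagonal block and transport it to the diagonal via $a\otimes e_{11}=(a\otimes e_{12})(1\otimes e_{21})$ and right distributivity in $B$. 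Your route buys two things. First, your unipotents are actually invertible, so your version of the additivity step would feed into the later extension result on $\overline{GL(M_2\otimes A)}$ (Proposition \ref{prop22}) without the $\lambda$-perturbation matrices $\alpha_a,\beta_b$ the paper introduces there to approximate its singular matrices (though the single entries $a_{ij}\otimes e_{ij}$ needed for the Peirce step would still require approximation). Second, you make explicit that $\varphi(0)=0$, derived cleanly from $P=\varphi(e_{11}\cdot 1)=P+PQ$ and $PQ=\varphi(e_{11}e_{22})=\varphi(0)$; the paper uses this tacitly when it ``observes the upper left corner,'' since the Peirce expansions of its rank-one matrices contain terms $\varphi(0\otimes e_{ij})$ that must be known to vanish. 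All your intermediate computations ($W(c)\in PBQ$, $W(a)W(b)=0$ from $QP=0$, $eW(c)=W(c)e'$-type absorptions, cancellation of $e$ in the additive group of $B$) check out.
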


\begin{proof}
Clearly (a) implies (b). Assume (b).
We have $\varphi(x_{ij} \otimes e_{ij}) \varphi(y_{kl} \otimes e_{kl}) = \varphi(x_{ij} y_{kl} \otimes e_{il}) \delta_{j,k}$
for $1 \le i,j,k,l \le 2$.
One has 
$$\varphi(x) = \varphi(1) \varphi(x) \varphi(1) = \sum_{i,j=1}^2 \varphi(x_{ij} \otimes e_{ij})$$
for all
$x = \sum_{i,j=1}^2 x_{ij} \otimes e_{ij} \in A \otimes M_2$
by (\ref{eq_e}).
%
Now notice that
$$\left ( \begin{matrix} 1 & a \\ 0 & 0 \end{matrix} \right ) \left ( \begin{matrix} b & 0 \\ 1 & 0 \end{matrix} \right )
=  \left ( \begin{matrix} a+b & 0 \\ 0 & 0 \end{matrix} \right )
$$
for all $a,b \in A$.
Applying here $\varphi$, using its multiplicativity and observing the upper left corner we obtain $\varphi(a \otimes e_{11})+\varphi(b \otimes e_{11}) =
\varphi(a \otimes e_{11} + b \otimes e_{11})$ and similar so for all other corners.
We conclude that $\varphi$ is additive.
\end{proof}

\begin{corollary}
Let $A$ and $B$ be rings where $A$ is unital.
Then $\varphi: A \rightarrow B$ is a ring homomorphism if and only if
$\varphi \otimes id_{M_2}$ is a semigroup homomorphism.
\end{corollary}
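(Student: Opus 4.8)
The plan is to deduce the statement from Proposition \ref{prop1}. The direction ``$\varphi$ a ring homomorphism $\Rightarrow \varphi \otimes \mathrm{id}_{M_2}$ a semigroup homomorphism'' is immediate, since the amplification of a ring homomorphism is again a ring homomorphism and in particular multiplicative. So I would concentrate on the converse. Set $\Phi := \varphi \otimes \mathrm{id}_{M_2} : A \otimes M_2 \rightarrow B \otimes M_2$, and regard it as a function from the unital ring $A \otimes M_2$ (unital because $A$ is) into the ring $B \otimes M_2$. The idea is to apply Proposition \ref{prop1} to $\Phi$, with the ``$A$'' of that proposition taken to be our $A$ and the ``$B$'' taken to be $B \otimes M_2$, so that $\Phi$ turns out to be a ring homomorphism; afterwards I recover $\varphi$ from its $(1,1)$-corner.

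By hypothesis $\Phi$ is a semigroup homomorphism, so the only thing left to verify in order to invoke part (b) of Proposition \ref{prop1} is the single relation (\ref{eq_e}), namely $\Phi(1) = \Phi(e_{11}) + \Phi(e_{22})$. This is the step I expect to be the crux, and it is where the genuine content lies. The clean way to obtain it is to use that the amplification $\varphi \otimes \mathrm{id}_{M_2}$ is additive, together with the matrix identity $1 = e_{11} + e_{22}$ in $A \otimes M_2$: then $\Phi(1) = \Phi(e_{11} + e_{22}) = \Phi(e_{11}) + \Phi(e_{22})$, which is exactly (\ref{eq_e}). I would take care here to make sure that the amplification really does respect addition (equivalently, that $\varphi(0) = 0$), since a map that is only multiplicative need not satisfy this; indeed the relation (\ref{eq_e}) is precisely the extra piece of information that separates a semigroup homomorphism from a ring homomorphism, and it is this identity $1 = e_{11} + e_{22}$ that lets the additive structure of $M_2$ supply it for free. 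Granting (\ref{eq_e}), Proposition \ref{prop1} yields that $\Phi$ is a ring homomorphism.

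The final step, descending from $\Phi$ back to $\varphi$, I expect to be routine. Since $\Phi$ is now additive and multiplicative and satisfies $\Phi(a \otimes e_{11}) = \varphi(a) \otimes e_{11}$, the identities $(a+b)\otimes e_{11} = a \otimes e_{11} + b \otimes e_{11}$ and $(a \otimes e_{11})(b \otimes e_{11}) = ab \otimes e_{11}$ give, upon reading off the $(1,1)$-entry, $\varphi(a+b) = \varphi(a) + \varphi(b)$ and $\varphi(ab) = \varphi(a)\varphi(b)$ for all $a,b \in A$. Hence $\varphi$ is a ring homomorphism, completing the converse. In short, the substantive work is the reduction to (\ref{eq_e}) via the splitting $1 = e_{11} + e_{22}$ and the appeal to Proposition \ref{prop1}; the corner restriction at the end is a formality.
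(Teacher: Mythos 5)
Your overall architecture is the intended one: the forward direction is trivial, and the converse should reduce to Proposition \ref{prop1} applied to $\Phi:=\varphi\otimes id_{M_2}$ (with the target ring of that proposition taken to be $B\otimes M_2$), followed by reading off the $(1,1)$-corner; that final corner step you carry out correctly. But at exactly the point you yourself identify as the crux --- verifying identity (\ref{eq_e}) for $\Phi$ --- your argument is circular. You justify $\Phi(1)=\Phi(e_{11}+e_{22})=\Phi(e_{11})+\Phi(e_{22})$ by appealing to additivity of the amplification, asserting that the identity $1=e_{11}+e_{22}$ lets the additive structure of $M_2$ ``supply it for free.'' It does not. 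Reading $\Phi$ entrywise, one has
$$\Phi(e_{11})+\Phi(e_{22})=\left(\begin{matrix}\varphi(1)+\varphi(0) & 2\varphi(0)\\ 2\varphi(0) & \varphi(0)+\varphi(1)\end{matrix}\right),\qquad \Phi(1)=\left(\begin{matrix}\varphi(1) & \varphi(0)\\ \varphi(0) & \varphi(1)\end{matrix}\right),$$
so (\ref{eq_e}) for $\Phi$ is precisely equivalent to $\varphi(0)=0$ --- the very fact you flag as needing care and then never establish.

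Moreover, $\varphi(0)=0$ cannot be extracted from the semigroup-homomorphism hypothesis alone: take $A=B=\Q$ and $\varphi\equiv 1/2$. Then, entrywise, $\varphi\otimes id_{M_2}$ is the constant map onto the matrix with all entries $1/2$, which is idempotent (each entry of its square is $2\cdot(1/2)^2=1/2$), hence this $\Phi$ is multiplicative, while $\varphi$ is no ring homomorphism. What saves the statement is the meaning of the amplification itself: for $a\otimes m\mapsto\varphi(a)\otimes m$ to be well defined on $A\otimes M_2$ one needs $\varphi(0)\otimes e_{11}=\Phi(0)=\varphi(0)\otimes e_{12}$, which forces $\varphi(0)=0$; equivalently, $\varphi(0)=0$ is built into reading $\Phi(e_{11})$ as $\varphi(1)\otimes e_{11}$ rather than as the entrywise image of the matrix $e_{11}$. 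So the missing step is not hard, but it is a genuine gap: you must record $\varphi(0)=0$ as part of (or a consequence of) the definition of $\varphi\otimes id_{M_2}$, rather than derive it from multiplicativity or from $1=e_{11}+e_{22}$. Once that observation is supplied, (\ref{eq_e}) is immediate and the remainder of your proof --- the appeal to Proposition \ref{prop1} and the corner restriction --- is correct and coincides with the derivation the paper intends.
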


%
%



\begin{corollary}     \label{proposition_2}
Let $A,B$ be algebras over a field $K$ where $A$ is unital and $\varphi : M_2 \otimes A \rightarrow B$ an
arbitrary function.
Then the following are equivalent:
\begin{itemize}
\item[(a)]

$\varphi$ is an algebra homomorphism.

\item[(b)]
$\varphi$ is a semigroup homomorphism which is linear on $K e_{11} + K e_{22}$.

\end{itemize}
If $K = \C$ then we may also add

\begin{itemize}

\item[(c)] 
$\varphi$ is a semigroup homomorphism which is 
linear on $\R 1$ and satisfies 
\begin{equation}  \label{equ_i}  
\varphi(i)= i \varphi(e_{11}) + i\varphi(e_{22}).
\end{equation}

\end{itemize}

\end{corollary}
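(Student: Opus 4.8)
The plan is to bootstrap everything from Proposition \ref{prop1}, whose hypothesis is precisely the corner relation (\ref{eq_e}). For each of the conditions (b) and (c) I would first extract (\ref{eq_e}), invoke Proposition \ref{prop1} to obtain additivity and multiplicativity, and then upgrade multiplicativity to $K$-homogeneity by a short argument exploiting that $\varphi(1)$ acts as the unit on the image of $\varphi$. The interesting direction is always the one producing (\ref{eq_e}).

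For (a)$\Leftrightarrow$(b), the implication (a)$\Rightarrow$(b) is immediate, since an algebra homomorphism is in particular additive and linear. For (b)$\Rightarrow$(a), note that $1 = e_{11}+e_{22}$ lies in $K e_{11}+K e_{22}$, so linearity there yields $\varphi(1)=\varphi(e_{11})+\varphi(e_{22})$, and Proposition \ref{prop1} makes $\varphi$ a ring homomorphism. For homogeneity I would fix $\lambda\in K$; since $\lambda 1=\lambda e_{11}+\lambda e_{22}$ again lies in $K e_{11}+K e_{22}$, linearity gives $\varphi(\lambda 1)=\lambda(\varphi(e_{11})+\varphi(e_{22}))=\lambda\varphi(1)$. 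Writing $\lambda x=(\lambda 1)x$ and using multiplicativity then gives $\varphi(\lambda x)=\varphi(\lambda 1)\varphi(x)=\lambda\varphi(1)\varphi(x)=\lambda\varphi(x)$, because $\varphi(1)\varphi(x)=\varphi(x)$. Hence $\varphi$ is an algebra homomorphism.

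Now let $K=\C$; (a)$\Rightarrow$(c) is again immediate, so the content is (c)$\Rightarrow$(a). I would set $p=\varphi(e_{11})$, $q=\varphi(e_{22})$ and $z=\varphi(0)$; multiplicativity and the relations among the matrix units give $p^2=p$, $q^2=q$, $pq=qp=z$ and $pz=zp=z$. From $i^2=-1$ and linearity on $\R 1$ one gets $\varphi(i)^2=\varphi(-1)=-\varphi(1)$, while (\ref{equ_i}) reads $\varphi(i)=i(p+q)$, so $\varphi(1)=(p+q)^2=p+q+2z$. Multiplying this identity on the left by $p$ and using $p\varphi(1)=\varphi(e_{11})=p$ together with $pz=z$ gives $p=p+3z$, whence $z=0$ as $\C$ has characteristic zero. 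Thus (\ref{eq_e}) holds and Proposition \ref{prop1} makes $\varphi$ additive and multiplicative. For $\C$-homogeneity, (\ref{equ_i}) now reads $\varphi(i)=i\varphi(1)$; for $\lambda=s+ti$ with $s,t\in\R$, additivity gives $\varphi(\lambda 1)=\varphi(s1)+\varphi(t(i1))$, and since $t(i1)=(t1)(i1)$ multiplicativity gives $\varphi(t(i1))=(t\varphi(1))(i\varphi(1))=ti\,\varphi(1)$, so $\varphi(\lambda 1)=\lambda\varphi(1)$. The computation of (b)$\Rightarrow$(a) then yields $\varphi(\lambda x)=\lambda\varphi(x)$, finishing (a).

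I expect the only genuine obstacle to be the step producing $z=\varphi(0)=0$ in the complex case. Unlike (b), condition (c) never mentions the subspace $\C e_{11}+\C e_{22}$, so (\ref{eq_e}) cannot be read off directly and must be squeezed out of the single scalar relation (\ref{equ_i}) together with real linearity. The idempotent bookkeeping around $z$ and the essential use of characteristic zero is the crux; once (\ref{eq_e}) is secured, the remainder reduces to Proposition \ref{prop1} and the homogeneity trick.
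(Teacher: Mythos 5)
Your proof is correct, and its overall architecture matches the paper's: extract the corner identity (\ref{eq_e}), invoke Proposition \ref{prop1} to get a ring homomorphism, and then obtain $K$-homogeneity from $\varphi(\lambda x)=\varphi((\lambda 1)x)=\varphi(\lambda 1)\varphi(x)=\lambda\varphi(x)$ --- your (b)$\Rightarrow$(a) is just the paper's one-line version with the details filled in. The genuine divergence is in how (c)$\Rightarrow$(a) produces (\ref{eq_e}). Writing $p=\varphi(e_{11})$, $q=\varphi(e_{22})$, $z=\varphi(0)$ as you do, the paper raises (\ref{equ_i}) to the \emph{fourth} power, so that $\varphi(1)=\varphi(i^4)=\varphi(i)^4=(p+q)^4$, and reads off (\ref{eq_e}); you instead \emph{square} it against $\varphi(i)^2=\varphi(-1)=-\varphi(1)$ (using linearity on $\R 1$) to get $\varphi(1)=(p+q)^2=p+q+2z$, and then kill $z$ via the idempotent computation $p=p\varphi(1)=p+3z$ together with torsion-freeness of the $\C$-vector space $B$. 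Your instinct that $z$ is the crux is well placed: the paper's step $(p+q)^4=p+q$ silently uses $pq=qp=\varphi(0)=0$, which is nowhere argued there (without it one gets $(p+q)^4=p+q+14z$), so your version actually patches an implicit gap. Worth noting, though, that there is a one-line route to $z=0$ in both (b) and (c): since $0=0\cdot 1$ lies in $\R 1$ (respectively in $Ke_{11}+Ke_{22}$), linearity of $\varphi$ on that subspace already forces $\varphi(0)=0$. Your $3z=0$ argument reaches the same conclusion without exploiting that $0$ belongs to the subspace, at the cost of invoking characteristic zero --- which is free here, $B$ being a $\C$-algebra.
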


\begin{proof}
(a) $\Rightarrow$ (b) $\Rightarrow$ (c) 
are trivial.
(b) $\Rightarrow$ (a): $\varphi$ is $K$-homogeneous and by Proposition \ref{prop1} a ring homomorphism.
(c) $\Rightarrow$ (a):
$\varphi$ is $\R$-linear by $\varphi(\lambda 1 x)= \lambda \varphi(1) \varphi(x)$ ($\lambda \in \R$). 
If we take (\ref{equ_i}) to the four then we get
(\ref{eq_e}). 
Combining (\ref{equ_i}) and (\ref{eq_e}) gives $\varphi(i) = i \varphi(1)$ and thus $\varphi$
is $\C$-linear. The assertion follows then from 
Proposition \ref{prop1}.
\end{proof}

\begin{corollary}     \label{corollary_cstar}
Let $A,B$ be $C^*$-algebras where $A$ is unital and $\varphi : M_2 \otimes A \rightarrow B$ an
arbitrary function.
Then the following are equivalent:
\begin{itemize}
\item[(a)]

$\varphi$ is a $*$-homomorphism.



\item[(b)]
$\varphi$ is a $*$-semigroup homomorphism satisfying identity
(\ref{equ_i}).

\end{itemize}

\end{corollary}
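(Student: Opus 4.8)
The implication (a) $\Rightarrow$ (b) is immediate: a $*$-homomorphism is multiplicative and involution-preserving, and since $\varphi(e_{11})+\varphi(e_{22})=\varphi(e_{11}+e_{22})=\varphi(1)$ and $\varphi$ is $\C$-linear, $\varphi(i)=i\varphi(1)=i\varphi(e_{11})+i\varphi(e_{22})$. The substance is (b) $\Rightarrow$ (a), and the plan is to reduce it to Corollary \ref{proposition_2}(c) by exhibiting $\varphi$ as linear on $\R 1$. The two genuinely new ingredients beyond the algebraic setting are positivity-preservation and uniqueness of positive square roots in a $C^*$-algebra.

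First I would record that $p:=\varphi(1)$ is a projection, since $\varphi(1)=\varphi(1)^2=\varphi(1^*)=\varphi(1)^*$, and that $\varphi$ sends positive elements to positive elements, because $\varphi(b^*b)=\varphi(b)^*\varphi(b)\ge 0$. Next I would extract (\ref{eq_e}) from (\ref{equ_i}). Writing $E:=\varphi(e_{11})+\varphi(e_{22})$, which is self-adjoint and positive, orthogonality of $e_{11},e_{22}$ gives $E^2=E+2\varphi(0)$, whereas (\ref{equ_i}) together with $\varphi(i)^*\varphi(i)=\varphi((i1)^*(i1))=\varphi(1)=p$ gives $E^2=(iE)^*(iE)=p$. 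Since $E\ge 0$ and $p=p^2$ with $p\ge 0$, both $E$ and $p$ are positive square roots of $p$, so uniqueness forces $E=p$, which is exactly (\ref{eq_e}); comparing $E^2=E+2\varphi(0)$ with $E^2=E$ also yields $\varphi(0)=0$.

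With (\ref{eq_e}) in hand, Proposition \ref{prop1} applies and shows $\varphi$ is additive, hence a ring homomorphism. The remaining and main point is $\R$-homogeneity, where I must rule out pathological (merely $\Q$-linear) behaviour on scalars. Here additivity promotes positivity-preservation to monotonicity: if $a\le b$ then $\varphi(b)-\varphi(a)=\varphi(b-a)\ge 0$. For real $\lambda$ and rationals $q\le\lambda\le q'$ one has $q1\le\lambda 1\le q'1$, and applying $\varphi$ together with the $\Q$-linearity that additivity already provides gives $qp\le\varphi(\lambda 1)\le q'p$; letting $q\uparrow\lambda$ and $q'\downarrow\lambda$ squeezes out $\varphi(\lambda 1)=\lambda p$. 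Thus $\varphi$ is linear on $\R 1$, so Corollary \ref{proposition_2}(c) yields that $\varphi$ is an algebra homomorphism, and being involution-preserving it is a $*$-homomorphism.

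The step I expect to be the main obstacle is precisely this last passage from additivity to $\R$-homogeneity: without continuity as a hypothesis, a ring homomorphism between $\C$-algebras could a priori act wildly on $\R 1$, and it is the order structure of the $C^*$-algebra, with positivity-preservation upgraded to monotonicity by additivity, that rescues linearity through the squeezing argument. The preliminary deduction of (\ref{eq_e}) via uniqueness of positive square roots is the other place where the $C^*$-structure, rather than pure ring theory, is what makes the argument work.
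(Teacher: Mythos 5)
Your proof is correct, but it takes a genuinely different route from the paper's at both of its key steps. The paper derives (\ref{eq_e}) by raising (\ref{equ_i}) to the fourth power ($\varphi(1)=\varphi(i)^4=(\varphi(e_{11})+\varphi(e_{22}))^4$), while you obtain it from the $*$-structure via uniqueness of positive square roots: with $E=\varphi(e_{11})+\varphi(e_{22})$ you get $E^2=\varphi(i)^*\varphi(i)=\varphi(1)$ and $E\ge 0$, forcing $E=\varphi(1)$. Your version is arguably tidier, since it explicitly accounts for the cross terms $\varphi(e_{11})\varphi(e_{22})=\varphi(0)$ (the fourth-power computation literally yields $\varphi(1)=E+14\varphi(0)$, so the paper's terse step still implicitly requires killing $\varphi(0)$, e.g.\ by multiplying through by $\varphi(0)$ and using that $B$ is a complex vector space), and you get $\varphi(0)=0$ as a free by-product. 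More significantly, after Proposition \ref{prop1} delivers additivity and hence $\Q$-linearity, the paper settles $\R$-linearity by citing the automatic-continuity theorem \cite[Theorem 3.7]{tomforde} for ring $*$-homomorphisms between $C^*$-algebras, whereas you reprove exactly the special case needed by the classical order argument: $\varphi$ is positive since $\varphi(b^*b)=\varphi(b)^*\varphi(b)$, additivity upgrades this to monotonicity, and a rational squeeze $q1\le\lambda 1\le q'1$ (using closedness of the positive cone) gives $\varphi(\lambda 1)=\lambda\varphi(1)$, after which Corollary \ref{proposition_2}(c) finishes. What each approach buys: the paper's is shorter by outsourcing the analytic content to a citation and in fact yields full norm-continuity of $\varphi$; yours is self-contained and elementary, using only positivity and square-root uniqueness, which is in the spirit of how such automatic-continuity results for $*$-preserving ring maps are proved anyway. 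All the individual steps you use ($\varphi(e_{ii})$ are projections, $\varphi(i)^*\varphi(i)=\varphi(1)$, closedness of the cone, linearity on $\R 1$ being exactly hypothesis (c) of Corollary \ref{proposition_2}) check out.
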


\begin{proof}
Taking (\ref{equ_i}) to the four yields
(\ref{eq_e}). Hence $\varphi$ is a ring homomorphism by Proposition \ref{prop1} and thus $\Q$-linear.
By \cite[Theorem 3.7]{tomforde} $\varphi$ is continuous and thus $\C$-linear
by combining (\ref{equ_i}) and (\ref{eq_e}).
\end{proof}

\begin{corollary}
Let $A$ and $B$ be $C^*$-algebras where $A$ is unital.
Then $\varphi: A \rightarrow B$ is a $*$-homomorphism 
if and only if
$\varphi \otimes id_{M_2}$ is a $*$-semigroup homomorphism and $\varphi(i)= i \varphi(1)$.
\end{corollary}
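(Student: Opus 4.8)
The plan is to reduce everything to Corollary \ref{corollary_cstar}, applied to the map $\psi := \varphi \otimes id_{M_2} : A \otimes M_2 \to B \otimes M_2$, where I view the target $B \otimes M_2$ as an arbitrary $C^*$-algebra and use $A \otimes M_2 \cong M_2 \otimes A$ so that the hypotheses of that corollary are literally about $\psi$. The forward implication is immediate: a $*$-homomorphism $\varphi$ is $\C$-linear, so $\varphi(i) = i\varphi(1)$, and $\varphi \otimes id_{M_2}$ is then itself a $*$-homomorphism, hence a $*$-semigroup homomorphism. For the converse I assume $\psi$ is a $*$-semigroup homomorphism with $\varphi(i) = i\varphi(1)$ and aim to (i) check that $\psi$ satisfies identity (\ref{equ_i}), (ii) conclude via Corollary \ref{corollary_cstar} that $\psi$ is a $*$-homomorphism, and (iii) recover $\varphi$ as a corner of $\psi$.

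Step (iii) is routine. The corner embedding $\iota : A \to A \otimes M_2$, $a \mapsto a \otimes e_{11}$, is a (nonunital) $*$-homomorphism since $e_{11}^2 = e_{11} = e_{11}^*$, and the identification $\pi : B \otimes e_{11} \xrightarrow{\sim} B$, $b \otimes e_{11} \mapsto b$, is a $*$-isomorphism. Since $\psi(a \otimes e_{11}) = \varphi(a) \otimes e_{11}$ (using $\varphi(0)=0$, established below), we get $\varphi = \pi \circ \psi \circ \iota$, so once $\psi$ is a $*$-homomorphism, $\varphi$ is one as a composition of $*$-homomorphisms.

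The crux is step (i), and the subtlety is that $\psi = \varphi \otimes id_{M_2}$ is the entrywise map $(a_{ij}) \mapsto (\varphi(a_{ij}))$, so the off-diagonal entries of $\psi(e_{11})$, $\psi(e_{22})$ and $\psi(i)$ all carry a copy of $\varphi(0)$, and (\ref{equ_i}) fails unless $\varphi(0) = 0$. I therefore first prove $\varphi(0) = 0$. Writing $c = \varphi(0)$ and comparing $(1,2)$-entries in the multiplicative relation $\psi\bigl(\begin{smallmatrix} a & 0 \\ 0 & 0\end{smallmatrix}\bigr)\,\psi\bigl(\begin{smallmatrix} b & 0 \\ 0 & 0\end{smallmatrix}\bigr) = \psi\bigl(\begin{smallmatrix} ab & 0 \\ 0 & 0\end{smallmatrix}\bigr)$ yields $\varphi(a)c + c^2 = c$ for every $a \in A$; at $a = 0$ this gives $2c^2 = c$, whence $\varphi(a)c = c^2$ for all $a$. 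Specialising to $a = 1$ and $a = i$ and inserting $\varphi(i) = i\varphi(1)$ forces $c^2 = i c^2$, so $c^2 = 0$ and therefore $c = 2c^2 = 0$. With $\varphi(0)=0$ the elements $\psi(e_{11})$, $\psi(e_{22})$, $\psi(i)$ are the expected diagonal matrices, and (\ref{equ_i}) for $\psi$ collapses to exactly $\varphi(i) = i\varphi(1)$, which holds.

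I expect step (i) to be the only genuine obstacle, and it is precisely here that the hypothesis $\varphi(i) = i\varphi(1)$ is indispensable. Without it the constant map $\varphi \equiv \tfrac12$ into $B = \C$ produces a $\psi = \varphi \otimes id_{M_2}$ that sends every matrix to the fixed projection $\tfrac12\bigl(\begin{smallmatrix}1&1\\1&1\end{smallmatrix}\bigr)$ and is therefore a bona fide $*$-semigroup homomorphism, even though $\varphi$ is not additive. The identity $\varphi(i) = i\varphi(1)$ is exactly what excludes such pathologies by forcing $\varphi(0)=0$.
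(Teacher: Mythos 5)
Your proof is correct, and it in fact supplies a step that the paper's own one-line proof silently skips. The paper argues: by Proposition \ref{prop1}, $\varphi \otimes id_{M_2}$ is a ring homomorphism, hence $\Q$-linear, hence continuous by \cite[Theorem 3.7]{tomforde}, hence $\C$-linear using $\varphi(i)=i\varphi(1)$. But Proposition \ref{prop1} can only be invoked once identity (\ref{eq_e}) is known for $\psi=\varphi\otimes id_{M_2}$, and since $\psi$ acts entrywise, (\ref{eq_e}) for $\psi$ is \emph{equivalent} to $\varphi(0)=0$ --- a fact the paper never verifies. Your computation (comparing the $(1,2)$-entry in $\psi(\mathrm{diag}(a,0))\psi(\mathrm{diag}(b,0))=\psi(\mathrm{diag}(ab,0))$ to get $\varphi(a)c+c^2=c$ with $c=\varphi(0)$, then $2c^2=c$ and $\varphi(a)c=c^2$ for all $a$, and finally $ic^2=c^2$ from $\varphi(i)=i\varphi(1)$, forcing $c=0$) is exactly the missing lemma, and your constant example $\varphi\equiv\tfrac12$ shows it is not cosmetic: that map yields a genuine $*$-semigroup homomorphism after tensoring with $id_{M_2}$ although $\varphi$ is not additive, so some hypothesis forcing $\varphi(0)=0$ is indispensable (note in passing that the same example contradicts the paper's unproved ring-theoretic corollary following Proposition \ref{prop1}, which has no such hypothesis). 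Beyond this, your route differs only superficially from the paper's: you factor through Corollary \ref{corollary_cstar}, which bundles Proposition \ref{prop1}, the Tomforde continuity theorem and the $\C$-linearity argument, and then recover $\varphi$ as the $(1,1)$-corner of $\psi$ via $\varphi=\pi\circ\psi\circ\iota$ (the corner restriction, like your forward direction, is routine and correct), whereas the paper applies Proposition \ref{prop1} and the continuity theorem directly. The mathematical content of the reduction is the same; what your write-up buys is a complete argument where the paper's proof has a genuine, if reparable, gap.
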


\begin{proof}
By Proposition \ref{prop1} $\varphi \otimes id_{M_2}$ is a ring homomorphism,
thus $\Q$-linear and continuous by \cite[Theorem 3.7]{tomforde}.
\end{proof}

\begin{proposition}   \label{prop22}
Let $A,B$ be $C^*$-algebras where $A$ is unital and $\varphi : \overline{GL(M_2 \otimes A)} \rightarrow B$
an arbitrary function (norm closure).
Then the following are equivalent:
\begin{itemize}
\item[(a)]

$\varphi$ extends to a $*$-homomorphism $M_2 \otimes A \rightarrow B$.

\item[(b)]
$\varphi$ is a $*$-semigroup homomorphism 
satisfying identity (\ref{equ_i}).

\end{itemize}

Similarly, $\varphi$ extends to an algebra homomorphism if and ony if
$\varphi$ is a semigroup homomorphism which 
is continuous on $\R 1$ and satisfies identity
(\ref{equ_i}).


\end{proposition}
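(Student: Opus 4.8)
The implication (a) $\Rightarrow$ (b) is immediate: if a $*$-homomorphism $\Phi:M_2\otimes A\to B$ restricts to $\varphi$ on $\overline{GL(M_2\otimes A)}$, then $\varphi$ is a $*$-semigroup homomorphism, and $\C$-linearity together with $\Phi(1)=\Phi(e_{11})+\Phi(e_{22})$ gives (\ref{equ_i}) on the elements $i\,1,e_{11},e_{22}$, which lie in the domain. The whole content is the converse, and my plan is to exploit that $\overline{GL(M_2\otimes A)}$, although generally a proper subset of $M_2\otimes A$, already contains every element manipulated in the proof of Proposition \ref{prop1}; one then reproduces that proof inside the domain and extends by the obvious formula.

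First I would check that $\overline{GL(M_2\otimes A)}$ is a norm-closed $*$-subsemigroup containing $0=\lim_{\varepsilon\to 0}\varepsilon\,1$, every $a\otimes e_{ij}$, and the elementary matrices $\left(\begin{smallmatrix}1&a\\0&0\end{smallmatrix}\right)$ and $\left(\begin{smallmatrix}b&0\\1&0\end{smallmatrix}\right)$. This rests on explicit invertible approximants: $\left(\begin{smallmatrix}a&\varepsilon\\\varepsilon&0\end{smallmatrix}\right)$ is invertible for $\varepsilon\neq 0$ and tends to $a\otimes e_{11}$; multiplication by the invertible flip $\left(\begin{smallmatrix}0&1\\1&0\end{smallmatrix}\right)$, together with closedness under products, places all $a\otimes e_{ij}$ in the domain; and the two triangular matrices are limits of $\left(\begin{smallmatrix}1&a\\0&\varepsilon\end{smallmatrix}\right)$ and $\left(\begin{smallmatrix}b&\varepsilon\\1&0\end{smallmatrix}\right)$.

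Next, writing $p=\varphi(e_{11})$, $q=\varphi(e_{22})$, $s=p+q$ and $P=\varphi(1)$, multiplicativity makes $p,q$ commuting idempotents with $pq=\varphi(e_{11}e_{22})=\varphi(0)$, while $i^4=1$ and (\ref{equ_i}) give $s^4=\varphi(i)^4=P$. Resolving $P$ into the four orthogonal idempotents of the algebra generated by $p,q$ and comparing with $s^4$ forces $pq=0$ in characteristic zero; hence $\varphi(0)=0$ and $P=p+q$, i.e.\ (\ref{eq_e}). Consequently every $g$ in the domain satisfies $\varphi(g)=\varphi(1)\varphi(g)\varphi(1)=\sum_{i,j}\varphi(g_{ij}\otimes e_{ij})$, so $\Phi(x):=\sum_{i,j}\varphi(x_{ij}\otimes e_{ij})$ defines an extension of $\varphi$ to all of $M_2\otimes A$. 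The corner identity $\left(\begin{smallmatrix}1&a\\0&0\end{smallmatrix}\right)\left(\begin{smallmatrix}b&0\\1&0\end{smallmatrix}\right)=\left(\begin{smallmatrix}a+b&0\\0&0\end{smallmatrix}\right)$, now taking place inside the domain, makes each map $a\mapsto\varphi(a\otimes e_{ij})$ additive, and together with $\varphi(x\otimes e_{ij})\varphi(y\otimes e_{kl})=\delta_{jk}\varphi(xy\otimes e_{il})$ this shows $\Phi$ is additive and multiplicative, exactly as in Proposition \ref{prop1}.

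Thus $\Phi$ is a ring homomorphism, hence $\Q$-linear; \cite[Theorem 3.7]{tomforde} makes it continuous, so $\R$-linear, and (\ref{equ_i}) with (\ref{eq_e}) gives $\varphi(i)=i\varphi(1)$, upgrading this to $\C$-linearity; since $\varphi$ respects the involution and $(x_{ij}\otimes e_{ij})^*=x_{ij}^*\otimes e_{ji}$, the map $\Phi$ is the desired $*$-homomorphism. The final assertion follows by the same argument with the involution deleted: the above never used $*$ and yields a ring homomorphism $\Phi$, and continuity of $\varphi$ on $\R\,1$ promotes its $\Q$-linearity to $\R$-linearity, after which (\ref{equ_i}) again gives $\C$-linearity, so $\Phi$ is an algebra homomorphism. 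The genuinely new point — and the step I expect to be the main obstacle — is the very first one: ensuring that the non-invertible elements handled in Proposition \ref{prop1} really belong to $\overline{GL(M_2\otimes A)}$, since only there is $\varphi$ defined; once this is secured, the rest is the computation of Proposition \ref{prop1} and Corollary \ref{corollary_cstar} transcribed into the domain.
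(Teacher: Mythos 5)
Your proof is correct and follows the paper's strategy almost step for step: the same invertible approximants (your $\left(\begin{smallmatrix}a&\varepsilon\\\varepsilon&0\end{smallmatrix}\right)$, $\left(\begin{smallmatrix}1&a\\0&\varepsilon\end{smallmatrix}\right)$, $\left(\begin{smallmatrix}b&\varepsilon\\1&0\end{smallmatrix}\right)$ are exactly the paper's $\gamma_c,\alpha_a,\beta_b$), the same derivation of (\ref{eq_e}) by taking (\ref{equ_i}) to the fourth power, the same extension formula $\Phi(x)=\sum_{i,j}\varphi(x_{ij}\otimes e_{ij})$, the same corner trick for additivity, and the same endgame via $\Q$-linearity, \cite[Theorem 3.7]{tomforde}, and $\varphi(i)=i\varphi(1)$. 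There are two local divergences worth noting, both in your favor. First, for multiplicativity of the extension the paper invokes the fact that every element of a $C^*$-algebra is a finite sum of invertibles $\lambda u$ (\cite{murphy}) and multiplies out such decompositions, whereas you obtain it directly from $\varphi(x\otimes e_{ij})\varphi(y\otimes e_{kl})=\delta_{jk}\varphi(xy\otimes e_{il})$ together with corner additivity, computing $\Phi(x)\Phi(y)=\sum_{i,j,l}\varphi(x_{ij}y_{jl}\otimes e_{il})=\Phi(xy)$; this is valid (all factors and products lie in the closed subsemigroup $\overline{GL(M_2\otimes A)}$, as you checked) and removes one external citation from this step — be aware, though, that it is not literally ``exactly as in Proposition \ref{prop1}'', where multiplicativity was a hypothesis rather than a conclusion. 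Second, you fill a genuine gap the paper glosses over: ``taking (\ref{equ_i}) to the four'' yields $\varphi(1)=p+q$ only after one rules out $z:=\varphi(0)=pq=qp$; your orthogonal-idempotent resolution (equivalently, $s^4=P$ forces $15z=0$, hence $z=0$ by torsion-freeness of a $C^*$-algebra) makes this rigorous, and note that $z=0$ is also silently needed for the $\delta_{jk}$ relation itself. Your loose phrase about ``the four orthogonal idempotents of the algebra generated by $p,q$'' should really include $P-p-q+z$, which lies outside that algebra, but the computation goes through since $P$ acts as a unit on $p$, $q$, $z$.
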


\begin{proof}
Assume that $\varphi$ is a semigroup homomorphism satisfying (\ref{equ_i}).
Consider 
the matrices
$$ \gamma_c :=\left ( \begin{matrix} c & \lambda \\ \lambda & 0 \end{matrix} \right ), \; \alpha_a :=\left ( \begin{matrix} 1 & a \\ 0 & \lambda \end{matrix} \right ), \;
\beta_b := \; \left ( \begin{matrix} b & \lambda \\ 1 & 0 \end{matrix} \right )
$$
for $\lambda \in \R \backslash\{0\}$ and $a,b,c \in A$.
They are invertible; 
just notice that they are
evidently bijective 
operators on $H \oplus H$ for a representation of $A$ on a Hilbert space $H$.
Letting $\lambda \rightarrow 0$ we see that all single matrix entries $a_{ij} \otimes e_{ij}$ (for all $a_{ij} \in A$)
and all matrices indicated in the proof of Proposition \ref{prop1} are in 
$\overline{GL(M_2 \otimes A)}$.
Taking (\ref{equ_i}) to the four yields identity (\ref{eq_e}).
By the proof of Proposition \ref{prop1} we see that
$\varphi(a) = \sum_{i,j=1}^n \varphi(a_{ij} \otimes e_{ij})$ for all $a \in \overline{GL(M_2 \otimes A)}$, which we use now as a definition for $\varphi$
for all $a \in A \otimes M_2$.
Also by the proof of Proposition \ref{prop1} we have $\varphi(a_{ij} \otimes e_{ij} + b_{ij} \otimes e_{ij})
= \varphi(a_{ij} \otimes e_{ij}) + \varphi(b_{ij} \otimes e_{ij})$ for all $a_{ij},b_{ij} \in A$,
which shows that the extended $\varphi$ is additive.


Since every element in a $C^*$-algebra can be written as a finite sum of invertible
elements of the form $\lambda u$ with $\lambda \in \C$ and $u$ unitary (\cite{murphy}) we see that $\varphi$ is multiplicative.
If the originally given $\varphi$ respects involution this also shows that the new $\varphi$ does so;
in this case we are done with Corollary \ref{corollary_cstar}.

Otherwise, for proving the second equivalence we proceed: 
Since $\varphi$ is a ring homomorphism it is $\Q$-linear.
By continuity $\varphi(\lambda 1)= \lambda \varphi(1)$ for all $\lambda \in \R$
and for $\lambda=i$ by (\ref{eq_e}) and (\ref{equ_i}).
Hence $\varphi$ is $\C$-linear. 
%
%
\end{proof}

\section{$C^*$-homomorphisms and group homomorphisms}

The methods of this section applies analogously to rings $A$ and $B$, or Banach algebras where we use topology,
if every element in such rings allows to be written as a finite sum of invertible elements.
This is true for $C^*$-algebras (\cite{murphy}).


\begin{proposition}   \label{prop_unitaer}
Let $\varphi:A \rightarrow B$ be an arbitrary function between unital $C^*$-algebras
$A$ and $B$.

Then $\varphi$ is a unital $*$-homomorphism if and only if
$\varphi$ is $\C$-homogeneous
and
$\varphi \otimes id_{M_{16}}$ restricts to a group homomorphism
$$U(A \otimes M_{16}) \rightarrow U(B \otimes M_{16}) .$$
\end{proposition}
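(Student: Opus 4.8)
The plan is to prove the non-trivial implication (the group-homomorphism condition $\Rightarrow$ $*$-homomorphism); the converse is immediate, since a unital $*$-homomorphism is $\C$-homogeneous and $\varphi\otimes id_{M_{16}}$ is again a unital $*$-homomorphism, which carries unitaries to unitaries multiplicatively. Write $\Phi:=\varphi\otimes id_{M_{16}}$ and recall that $\Phi$ acts entrywise, so the $(i,j)$-entry of $\Phi(x)$ is $\varphi(x_{ij})$. First I would extract the easy consequences of $\Phi$ being a group homomorphism on $U(A\otimes M_{16})$. Since $1$ is the unit of the unitary group, $\Phi(1)=1$ forces $\varphi(1)=1$. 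For unitaries $u,v\in U(A)$, viewed as $u\otimes 1,v\otimes 1\in U(A\otimes M_{16})$, multiplicativity gives $\varphi(uv)=\varphi(u)\varphi(v)$, and because a group homomorphism preserves inverses while $u^*=u^{-1}$ and $\Phi(u)^{-1}=\Phi(u)^*$, one also gets $\varphi(u^*)=\varphi(u)^*$.

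The heart of the argument is additivity, and here I would turn an additive statement into a multiplicative one by a dilation-and-corner device. For a contraction $a\in A$ consider the unitary dilation
$$U_a:=\left(\begin{matrix} a & (1-aa^*)^{1/2} \\ (1-a^*a)^{1/2} & -a^* \end{matrix}\right)\in U(A\otimes M_2),$$
whose $(1,1)$-corner is $a$; its unitarity rests on the intertwining identity $a(1-a^*a)^{1/2}=(1-aa^*)^{1/2}a$. Given two contractions $a,b$, I would form the block-diagonal unitary $D:=\mathrm{diag}(U_a,U_b)\in U(A\otimes M_4)$ and conjugate by the scalar Hadamard unitary $W:=\frac1{\sqrt2}\left(\begin{matrix}1&1\\1&-1\end{matrix}\right)\otimes 1_{M_2}$ acting on the outer $M_2$, so that the $(1,1)$-entry of $WDW^*$ equals $\tfrac12(a+b)$. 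All these unitaries lie in $U(A\otimes M_4)\subseteq U(A\otimes M_{16})$. Reading the $(1,1)$-entry of $\Phi(WDW^*)$ in two ways gives the identity: entrywise it equals $\varphi(\tfrac12(a+b))$, whereas multiplicativity together with $\Phi(W)=W$ (by $\C$-homogeneity and $\varphi(1)=1$) gives $\Phi(WDW^*)=W\,\Phi(D)\,W^*$, and since $\Phi$ acts entrywise the $(1,1)$-entry of the latter is $\tfrac12(\varphi(a)+\varphi(b))$. Hence $\varphi(\tfrac12(a+b))=\tfrac12(\varphi(a)+\varphi(b))$; homogeneity upgrades this to $\varphi(a+b)=\varphi(a)+\varphi(b)$ for contractions, and rescaling extends it to all of $A$.

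With additivity and $\C$-homogeneity, $\varphi$ is $\C$-linear. To finish I would invoke the fact (used already in the proof of Proposition \ref{prop22}, see \cite{murphy}) that every element of a unital $C^*$-algebra is a finite linear combination of unitaries: writing $a=\sum_i\lambda_i u_i$ and $b=\sum_j\mu_j v_j$, linearity together with multiplicativity on unitaries gives $\varphi(ab)=\sum_{i,j}\lambda_i\mu_j\varphi(u_i)\varphi(v_j)=\varphi(a)\varphi(b)$, and likewise the involution identity on unitaries gives $\varphi(a^*)=\varphi(a)^*$. Hence $\varphi$ is a unital $*$-homomorphism.

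I expect the additivity step to be the main obstacle: the whole point is to manufacture the average $\tfrac12(a+b)$ as a corner of a product of unitaries, and the two ingredients making this possible are the unitary dilation $U_a$ of a contraction and the Hadamard conjugation that averages two diagonal blocks. I note that the computation only uses the corner $A\otimes M_4\subseteq A\otimes M_{16}$, so the full amplification leaves room to spare; the restriction of $\Phi$ to $U(A\otimes M_4)$ is again a group homomorphism, since $\varphi(1)=1$ allows one to pad with the identity.
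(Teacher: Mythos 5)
Your proof is correct, but the additivity step takes a genuinely different route from the paper's own argument. The paper works exclusively with \emph{scaled unitaries}: for $a,b \in \C\, U(A)$ it checks that $u = \left(\begin{smallmatrix} 1 & a \\ -a^* & 1\end{smallmatrix}\right)$ and $v = \left(\begin{smallmatrix} b & -1 \\ 1 & b^*\end{smallmatrix}\right)$ are themselves scaled unitaries whose product carries $a+b$ in the upper left corner, which yields $\varphi(a+b)=\varphi(a)+\varphi(b)$ \emph{only} when both summands are scaled unitaries; since $a+b$ is then no longer a scaled unitary, the paper cannot iterate directly and instead feeds the whole $2\times 2$ product $uv$ back into the same construction as a new scaled-unitary entry, doubling the matrix size through $M_2, M_4, M_8, M_{16}$ to reach additivity for sums of sixteen scaled unitaries --- exactly what is needed to expand $\varphi(xy)$ when $x,y$ are each sums of four scaled unitaries; this recursion is where the exponent $16$ comes from. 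Your Halmos dilation $U_a$ combined with Hadamard conjugation instead manufactures $\tfrac12(a+b)$ for \emph{arbitrary} contractions $a,b$ in one stroke, so after rescaling you obtain unrestricted additivity on all of $A$, no recursion is needed, and multiplicativity plus $*$-preservation follow from four-unitary decompositions just as in the paper (your preliminary steps --- the diagonal padding $u \mapsto \mathrm{diag}(u,1)$, $\varphi(1)=1$, $\varphi(0)=0$, $\varphi(u^*)=\varphi(u)^*$, and $\Phi(W)=W$ via $\C$-homogeneity --- are all sound, and the paper uses the same padding trick). What each approach buys: yours proves a formally sharper statement, since as you observe only the padded copy of $U(A\otimes M_4)$ is ever used, so the group-homomorphism hypothesis at matrix level $4$ already suffices, whereas the paper's recursion genuinely needs level $16$. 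Conversely, the paper's construction is purely algebraic --- no positive square roots $(1-aa^*)^{1/2}$, no intertwining identity, and no use of $\C$-homogeneity in the matrix entries --- which is why it transfers essentially verbatim to the $GL$ setting of Proposition \ref{abl}, where $\C$-homogeneity is not even assumed and your steps $\Phi(W)=W$ and the $\tfrac12$-rescaling would be unavailable; your dilation argument is specific to the unitary, $C^*$-algebraic case.
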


\begin{proof}
Since $\varphi_{16}$ is unital, necessarily $\varphi$ is unital.
Embedding $U(M_n(A)) \subseteq U(M_{16}(A))$ via $u \mapsto \mbox{diag}(u,1)$
it is clear that $\varphi_n$ restricts to group homomorphims
between the unitary groups too for $1 \le n \le 16$.
As $\varphi(u) \varphi(u^*)=1$ for $u \in U(A)$, $\varphi(u^*)= \varphi(u)^*$.


To simplify notation, let us say that $a$ is a 
scaled unitary in $A$
if $a \in \C U(A)$. 
The set of scaled unitaries forms a monoid.
Since $\varphi$ is $\C$-homogeneous, the maps $\varphi \otimes id_{M_n}$
restrict also to
monoid
homomorphisms $\C U(A \otimes M_n) \rightarrow
\C U(B \otimes M_n)$
between the set of 
scaled unitaries.

Let $a,b$ be scaled unitaries in $A$. 
Define scaled unitaries
\begin{equation}   \label{matrix1a}
u = 
\left (   \begin{matrix} 1 & a \\ -a^* & 1 \end{matrix} \right ), \qquad
v = 
\left (  \begin{matrix} b & -1 \\ 1 & b^* \end{matrix} \right ) .
\end{equation}
Their product is the scaled unitary
$$u v = 
\left (  \begin{matrix} a+b & -1+ ab^* \\ 1 -b a^*  & a^* + b^* \end{matrix} \right ) .$$
Applying here $\varphi \otimes id$ using $\varphi_2(uv)= \varphi_2(u) \varphi_2(v)$ and observing the upper left corner we obtain
$\varphi(a+b) = \varphi(a) + \varphi(b)$.

Now call the product $uv$ $a'$ and define
$b'$ to be also the product $uv$, however, with $a$ and $b$
replaced by other scaled unitaries $c$ and $d$, respectively, in $A$.


Now consider the same matrices $u$ and $v$ as in (\ref{matrix1a}) above, but
with $a$ replaced by $a'$ and $b$ 
by $b'$.
These are four times four matrices. Consider again the product $u v$ of these newly defined matrices $u$ and $v$.
It has $a' + b'$ in the $2 \times 2$ upper right corner and so the entry
$a+b+c+d$
in the $1 \times 1$ upper right corner.
Applying $\varphi \otimes id_{M_4}$ to this identity of $4 \times 4$-matrices 
and using
$$\varphi_4(uv) = \varphi_4(u) \varphi_4(v) = 
\left (   \begin{matrix} \varphi_2(1) & \varphi_2(a') \\ \varphi_2(-a'^*) & \varphi_2(1) \end{matrix}
\right )
\left (  \begin{matrix} \varphi_2(b') & \varphi_2(-1) \\ \varphi_2(1) & \varphi_2(b'^*) \end{matrix} \right )$$
yields
$\varphi(a+b +c+d) = \varphi(a+b) + \varphi(c+d) = \varphi(a) + \varphi(b) + \varphi(c)
+\varphi(d)$
by comparing the upper left corner.

Repeating this recursive procedure 
two more times
we get additivity of $\varphi$ of 
sixteen scaled unitaries.
Since we may write any element of $A$ as the sum of four scaled unitaries (\cite{murphy})
it is 
obvious that $\varphi$ is a $*$-homomorphism.
\end{proof}

\begin{proposition}   \label{abl}
Let $\varphi:A \rightarrow B$ be an arbitrary function between unital $C^*$-algebras
$A$ and $B$.

Then $\varphi$ is 
a unital ring homomorphism if and only if
$\varphi \otimes id_{M_{16}}$ restricts to a group homomorphism
$$GL(A \otimes M_{16}) \rightarrow GL(B \otimes M_{16}) .$$
\end{proposition}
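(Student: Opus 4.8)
The plan is to follow the architecture of the proof of Proposition \ref{prop_unitaer}, but to take advantage of the fact that the general linear group is a far richer supply of invertibles than the unitary group, so that additivity can be extracted in one stroke rather than by the recursive doubling used there. The forward direction is routine: if $\varphi$ is a unital ring homomorphism, then $\varphi \otimes id_{M_{16}}$ is a ring homomorphism $A \otimes M_{16} \rightarrow B \otimes M_{16}$, hence multiplicative, hence carries invertible elements to invertible elements and restricts to a group homomorphism on $GL(A \otimes M_{16})$.

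For the converse, assume $\varphi_{16} = \varphi \otimes id_{M_{16}}$ restricts to a group homomorphism on $GL(A \otimes M_{16})$. First I would note that $\varphi$ is unital: a group homomorphism fixes the identity, and $\varphi_{16}(1) = \mbox{diag}(\varphi(1), \ldots, \varphi(1))$, whence $\varphi(1) = 1$. Then, exactly as in Proposition \ref{prop_unitaer}, the diagonal embedding $u \mapsto \mbox{diag}(u,1)$ of $GL(A \otimes M_n)$ into $GL(A \otimes M_{16})$ shows that $\varphi_n$ restricts to a group homomorphism on $GL(A \otimes M_n)$ for every $1 \le n \le 16$; only the cases $n = 1$ and $n = 2$ will be needed.

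For additivity I would reuse the invertible matrices $\alpha_a$, $\beta_b$, $\gamma_c$ from the proof of Proposition \ref{prop22} (for a fixed $\lambda \in \R \setminus \{0\}$), which satisfy $\alpha_a \beta_b = \gamma_{a+b}$. Since $\alpha_a, \beta_b, \gamma_{a+b} \in GL(A \otimes M_2)$ for all $a, b \in A$, applying the multiplicative map $\varphi_2$ to this identity and comparing upper-left corners gives $\varphi(a+b) = \varphi(1)\varphi(b) + \varphi(a)\varphi(1) = \varphi(a) + \varphi(b)$, using $\varphi(1) = 1$. The decisive point --- and the reason the $GL$-version is gentler than its unitary analogue --- is that these matrices are invertible for every $a$ and $b$, so additivity is obtained on all of $A$ simultaneously, with no passage through sums of sixteen elements.

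It remains to promote multiplicativity from $GL(A)$ to all of $A$. On $GL(A) = GL(A \otimes M_1)$ the hypothesis already gives $\varphi(ab) = \varphi(a)\varphi(b)$ for invertible $a, b$. Invoking that every element of a $C^*$-algebra is a finite sum of invertibles (\cite{murphy}), I would write $a = \sum_i a_i$ and $b = \sum_j b_j$ with all $a_i, b_j$ invertible; then each $a_i b_j$ is invertible, so additivity together with multiplicativity on $GL(A)$ gives $\varphi(ab) = \sum_{i,j} \varphi(a_i b_j) = \sum_{i,j} \varphi(a_i)\varphi(b_j) = \varphi(a)\varphi(b)$. With additivity, multiplicativity and $\varphi(1) = 1$ in hand, $\varphi$ is a unital ring homomorphism. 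I expect the only delicate point to be the bookkeeping of invertibility: the entrywise map $\varphi_n$ is defined on all matrices, but the relation $\varphi_n(xy) = \varphi_n(x)\varphi_n(y)$ is licensed only when both factors lie in $GL$, so every matrix identity must be staged with both factors invertible --- which is exactly what the choice of $\alpha_a$ and $\beta_b$ accomplishes.
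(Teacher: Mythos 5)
Your proof is correct, and for the converse direction it takes a genuinely different route from the paper. The paper proves Proposition \ref{abl} by reference: it reruns the recursive doubling procedure of Proposition \ref{prop_unitaer} with scaled unitaries replaced by invertible elements, obtaining additivity for sums of up to sixteen invertibles through matrices of sizes $2, 4, 8, 16$ (this is what the exponent $16$ is there for), with the extra bookkeeping that zero summands must not be split, since $0 \notin GL(A)$; multiplicativity then comes from writing each factor as a sum of four invertibles, so that a product is a sum of sixteen products of invertibles. You shortcut all of this by borrowing $\alpha_a, \beta_b, \gamma_c$ from the proof of Proposition \ref{prop22}: since these are invertible for \emph{arbitrary} $a, b, c \in A$ and any fixed $\lambda \neq 0$ (whereas the matrices $u, v$ of (\ref{matrix1a}) stay scaled unitaries only when their entries are, which is what forces the recursion in the unitary setting), the single identity $\alpha_a \beta_b = \gamma_{a+b}$ in $GL(A \otimes M_2)$ yields unconditional additivity on all of $A$ in one step, and multiplicativity on all of $A$ then follows, as you say, from sums of invertibles together with the case $n=1$ obtained by the diagonal embedding. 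Your argument is shorter, eliminates the zero-splitting caveat entirely, and in fact establishes the sharper statement that a group homomorphism on $GL(A \otimes M_2)$ alone (rather than $M_{16}$) already forces $\varphi$ to be a unital ring homomorphism; what the paper's version buys instead is uniformity with Proposition \ref{prop_unitaer}, where the recursion is genuinely needed. One small point to make explicit: from $\varphi_{16}(1) = 1$ you should record not only $\varphi(1) = 1$ (diagonal entries) but also $\varphi(0) = 0$ (off-diagonal entries) --- your formula $\varphi_{16}(1) = \mbox{diag}(\varphi(1), \ldots, \varphi(1))$ tacitly assumes the latter, and the embedding $u \mapsto \mbox{diag}(u,1)$ needs $\varphi(0) = 0$ to give $\varphi_{16}(\mbox{diag}(u,1)) = \mbox{diag}(\varphi_n(u), 1)$.
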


\begin{proof}
We proof this exactly by the same recursive procedure as in the last proof. All we have to do is to replace scaled unitaries by invertible elements.
The zero element we do not split up:
for example we write $\varphi(a+0+c+d)= \varphi(a+0)+ \varphi(c)+ \varphi(d)$;
this shows that $\varphi$ is additive for a sum of up to $16$ unitaries.
\end{proof}

\begin{proposition}    \label{prop_last}
Let $\varphi:GL(A \otimes M_2) \rightarrow B$ be an arbitrary function where
$A$ and $B$ are $C^*$-algebras and $A$ is unital.

Then $\varphi$ extends to a $*$-homomorphism $A \otimes M_2 \rightarrow B$ if and only if
$\varphi$ is a $*$-semigroup homomorphism
which is uniformly continuous 
and satisfies the identity
$$\varphi \left ( \begin{matrix} i & 0 \\ 0 & i \end{matrix} \right )
= \lim_{\lambda \rightarrow 0, \, \lambda \in \R_+}
 i \varphi \left ( \begin{matrix} 1 & \lambda \\ \lambda & 0 \end{matrix} \right )
+
 i \varphi \left ( \begin{matrix} 0 & \lambda \\ \lambda & 1 \end{matrix} \right ) .
$$

An analogous equivalence holds true without the star 
prefixes (omitting $*$-).
%
\end{proposition}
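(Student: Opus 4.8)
The plan is to reduce the whole statement to Proposition \ref{prop22} by extending $\varphi$ from the group of invertibles to its norm closure, where uniform continuity is exactly the hypothesis that makes this extension possible.

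The forward implication is routine. If $\varphi$ is the restriction of a $*$-homomorphism $\Phi:A\otimes M_2\to B$, then $\varphi$ is a $*$-semigroup homomorphism, and it is uniformly continuous because $\Phi$ is contractive. For the displayed identity I would use that $\Phi$ is linear and continuous: since $\left(\begin{smallmatrix}1&\lambda\\\lambda&0\end{smallmatrix}\right)=e_{11}+\lambda(e_{12}+e_{21})\to e_{11}$ and $\left(\begin{smallmatrix}0&\lambda\\\lambda&1\end{smallmatrix}\right)\to e_{22}$ in norm as $\lambda\to 0^+$, the right-hand side converges to $i\Phi(e_{11})+i\Phi(e_{22})=i\Phi(1)=\Phi(i)$, which is the left-hand side.

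For the converse assume $\varphi$ is a uniformly continuous $*$-semigroup homomorphism satisfying the limit identity. Since $GL(A\otimes M_2)$ is dense in the complete metric space $\overline{GL(A\otimes M_2)}$, uniform continuity yields a unique continuous extension $\overline{\varphi}$ to the closure. First I would check that $\overline{\varphi}$ is again a $*$-semigroup homomorphism: the closure is stable under multiplication and involution (limits of invertibles), so approximating $x,y$ by invertibles $x_n,y_n$ and using continuity of multiplication, of the involution, and of $\overline{\varphi}$ gives $\overline{\varphi}(xy)=\overline{\varphi}(x)\overline{\varphi}(y)$ and $\overline{\varphi}(x^*)=\overline{\varphi}(x)^*$. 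Next I would recover identity (\ref{equ_i}) for $\overline{\varphi}$: the matrices $\left(\begin{smallmatrix}1&\lambda\\\lambda&0\end{smallmatrix}\right)$ and $\left(\begin{smallmatrix}0&\lambda\\\lambda&1\end{smallmatrix}\right)$ are invertible for $\lambda\neq 0$ (as in the proof of Proposition \ref{prop22}) and converge to $e_{11}$ and $e_{22}$, which therefore lie in the closure; by continuity of $\overline{\varphi}$ the right-hand side of the hypothesised limit identity equals $i\overline{\varphi}(e_{11})+i\overline{\varphi}(e_{22})$, while the left-hand side is $\overline{\varphi}(i)$, so $\overline{\varphi}(i)=i\overline{\varphi}(e_{11})+i\overline{\varphi}(e_{22})$, which is precisely (\ref{equ_i}). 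Now $\overline{\varphi}$ is a $*$-semigroup homomorphism on $\overline{GL(A\otimes M_2)}$ satisfying (\ref{equ_i}), so Proposition \ref{prop22} produces a $*$-homomorphism $A\otimes M_2\to B$ extending $\overline{\varphi}$, hence extending $\varphi$. The version without stars is handled identically, obtaining a semigroup homomorphism $\overline{\varphi}$ on the closure that satisfies (\ref{equ_i}) and whose restriction to $\R 1$ is continuous (again by uniform continuity, since $\R 1\setminus\{0\}\subseteq GL(A\otimes M_2)$), so the second half of Proposition \ref{prop22} applies.

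I expect the only real obstacle to be the extension step, and its role is conceptual rather than computational: because $e_{11}$ and $e_{22}$ are not invertible, the values $\varphi(e_{11})$ and $\varphi(e_{22})$ are not defined on the given domain and can be accessed only as limits along invertible approximants, which is exactly why the identity must be phrased as a limit. Uniform continuity (as opposed to mere continuity) is what guarantees that these limits exist and fit together into a well-defined continuous $*$-semigroup homomorphism on the whole closure; once that is in place, Proposition \ref{prop22} carries out all the remaining algebraic work.
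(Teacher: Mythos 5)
Your proposal is correct and follows essentially the same route as the paper: extend $\varphi$ by uniform continuity to $\overline{GL(A \otimes M_2)}$ and then invoke Proposition \ref{prop22}. You merely spell out details the paper leaves implicit (that the extension is again a $*$-semigroup homomorphism, that the limit identity yields (\ref{equ_i}) via the invertible approximants of $e_{11}$ and $e_{22}$, and the routine forward direction), all of which check out.
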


\begin{proof}
Since $\varphi$ is uniformly continuous 
it is clear that it maps Cauchy sequences to Cauchy sequences.
We can thus extend it continuously to $\overline{GL(A \otimes M_2)}$ via the limits of Cauchy sequences.
The assertion follows then from Proposition \ref{prop22}.
\end{proof}

\bibliographystyle{plain}
\bibliography{references}

\end{document}